\numberwithin{equation}{section}
\theoremstyle{plain}
\newtheorem{theorem}{Theorem}[section]
\newtheorem{proposition}[theorem]{Proposition}
\newtheorem{lemma}[theorem]{Lemma}
\newtheorem{conjecture}[theorem]{Conjecture}
\theoremstyle{definition}
\newtheorem{definition}{Definition}[section]
\newtheorem{remark}[definition]{Remark}
\def\^#1{\ifmmode {\mathaccent"705E #1} \else {\accent94 #1} \fi}
\def\~#1{\ifmmode {\mathaccent"707E #1} \else {\accent"7E #1} \fi}
\def\*#1{#1^\ast}
\edef\-#1{\noexpand\ifmmode {\noexpand\bar{#1}} \noexpand\else \-#1\noexpand\fi}
\def\>#1{\vec{#1}}
\def\.#1{\dot{#1}}
\def\atop{\@@atop}
\def\%#1{\mathcal{#1}}
\renewcommand{\leq}{\leqslant}
\renewcommand{\geq}{\geqslant}
\renewcommand{\phi}{\varphi}
\newcommand{\eq}{\eqref}
\def\tsfrac#1#2{{\textstyle\frac{#1}{#2}}}
\newcommand{\IE}{\mathbbm{E}}
\newcommand{\IR}{\mathbbm{R}}
\def\ba#1{\begin{align*}#1\end{align*}}
\def\ban#1{\begin{align}#1\end{align}}
\def\norm#1{\Vert#1\Vert}
\def\abs#1{\vert#1\vert}
\def\now{%
\minute=\time%
\hour=\time \divide \hour by 60%
\hourMins=\hour \multiply\hourMins by 60%
\advance\minute by -\hourMins%
\zeroPadTwo{\the\hour}:\zeroPadTwo{\the\minute}%
}
\def\zeroPadTwo#1{\ifnum #1<10 0\fi#1}
\renewcommand\section{\@startsection {section}{1}{\z@}%
{-3.5ex \@plus -1ex \@minus -.2ex}%
{1.3ex \@plus.2ex}%
{\center\small\sc\MakeTextUppercase}}
\def\subsection#1{\@startsection {subsection}{2}{0pt}%
{-3.5ex \@plus -1ex \@minus -.2ex}%
{1ex \@plus.2ex}%
{\bf\mathversion{bold}}{#1}}
\def\subsubsection#1{\@startsection{subsubsection}{3}{0pt}%
{\medskipamount}%
{-10pt}%
{\normalsize\itshape}{\kern-2.2ex. #1.}}
\def\blfootnote{\xdef\@thefnmark{}\@footnotetext}
\def\ed{\stackrel{d}{=}}
\def\t#1{^{(#1)}}
\begin{document}

\title{\sc\bf\large\MakeUppercase{
Archimedes, Gauss, and Stein}}
\author{Jim Pitman\thanks{Research supported in part by NSF grant DMS-0806118}\,\, and Nathan Ross}
\date{\it University~of~California,~Berkeley}
\maketitle

\begin{abstract}
We discuss a characterization of the centered Gaussian distribution
which can be read from results of Archimedes and Maxwell,
and relate it to Charles Stein's well-known characterization of the same distribution.
These characterizations fit into a more general framework involving
the beta-gamma algebra, which explains
some other characterizations appearing
in the Stein's method literature.
\end{abstract}

\section{Characterizing the Gaussian distribution}

One of Archimedes' proudest accomplishments 
was a proof that the surface area of a sphere is 
equal to the surface area of the tube of the smallest cylinder 
containing it; see Figure \ref{fig1}.  
Legend has it that he was so pleased 
with this result that he arranged to have
an image similar to Figure \ref{fig1} inscribed on his tomb.
\begin{figure}[h]
\begin{center}
\includegraphics[scale=.5]{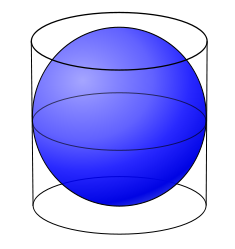}
\end{center}
\caption{An illustration of the inscription on Archimedes' tomb.} \label{fig1}
\end{figure}

More precisely, in the work ``On the Sphere and Cylinder, Book I" as translated on Page 1 of \cite{hea97},
Archimedes states that for every plane perpendicular
to the axis of the tube of the cylinder, the surface areas lying above the plane on the sphere and on the tube are equal.
See Figure \ref{fig2} for illustration and also the discussion around Corollary 7 of \cite{apmn04}.

\begin{figure}[h]
\begin{center}
\includegraphics[scale=.5]{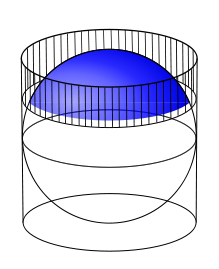}
\end{center}
\caption{The surface area of the shaded ``cap" of the sphere above a plane
is equal to the striped surface area on the tube of the cylinder above the same plane.} \label{fig2}
\end{figure}

In probabilistic terms, if a point is picked uniformly at random according to surface area
on the unit sphere in three dimensions, then its projection onto any given axis having origin at the center of the sphere
is uniformly distributed on the interval $(-1,1)$, independent of the angular
component in the plane perpendicular to that axis.  Formally, we have the following result.

\begin{proposition}\label{proparc}
If $V$ is uniformly distributed on the interval $(-1,1)$ and $\Theta$ is
uniformly distributed on the interval $(0,2\pi)$ and is independent 
of $V$, then
\ba{
\left(V, \sqrt{1-V^2}  \cos(\Theta), \sqrt{1-V^2} \sin(\Theta) \right) 
}
is uniformly distributed on the surface of the two dimensional sphere of radius one.
\end{proposition}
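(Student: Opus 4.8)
The plan is to recognize the stated map as the classical parametrization of the sphere by cylindrical height and azimuthal angle, and to show that this parametrization carries the uniform product law of $(V,\Theta)$ to the normalized surface-area measure on the unit sphere $S^2$. Concretely, write $\Phi(z,\theta) = \bklr{z, \sqrt{1-z^2}\cos\theta, \sqrt{1-z^2}\sin\theta}$ for $(z,\theta)\in(-1,1)\times(0,2\pi)$, so that $\Phi(V,\Theta)$ is the random point in question. Since $(V,\Theta)$ has joint density $\frac{1}{4\pi}$ on the rectangle, it suffices to verify that the surface-area element induced by $\Phi$ equals $dz\,d\theta$; the total area is then $4\pi$ and the pushforward has constant density $\frac{1}{4\pi}$ against surface area, which is exactly the uniform distribution on $S^2$.

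First I would compute the two tangent vectors $\Phi_z$ and $\Phi_\theta$ by differentiating the coordinates. A short calculation gives $|\Phi_z|^2 = \frac{1}{1-z^2}$ and $|\Phi_\theta|^2 = 1-z^2$, and — the crucial point — $\Phi_z \cdot \Phi_\theta = 0$, so the two tangent directions are orthogonal at every point. Hence the area element is $|\Phi_z \times \Phi_\theta|\,dz\,d\theta = \sqrt{|\Phi_z|^2\,|\Phi_\theta|^2}\,dz\,d\theta = dz\,d\theta$. This identity — that $\Phi$ is area-preserving from the cylinder to the sphere — is precisely the analytic content of Archimedes' theorem quoted above, so in effect I would be rederiving his result in coordinates rather than assuming it.

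With the area element in hand, the conclusion follows from the change-of-variables formula: for any bounded continuous $f$ on $S^2$,
\[
\IE\bigl[f(\Phi(V,\Theta))\bigr] = \frac{1}{4\pi}\int_{-1}^1\int_0^{2\pi} f(\Phi(z,\theta))\,d\theta\,dz = \frac{1}{4\pi}\int_{S^2} f\,dA,
\]
which identifies the law of $\Phi(V,\Theta)$ with the normalized surface measure.

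The only real obstacle is the tangent-vector computation, and even there the work is light: the factors $\sqrt{1-z^2}$ and $1/\sqrt{1-z^2}$ appearing in $|\Phi_\theta|$ and $|\Phi_z|$ cancel, and this cancellation is exactly what makes the Jacobian constant. I would take some care to note that $\Phi$ is a bijection onto the sphere minus the two poles and the seam $\theta\in\{0\}$, a set of surface measure zero, so that working on the open rectangle loses nothing. Alternatively, one could bypass the Jacobian entirely and simply invoke the Archimedean equal-area property stated in the text — that the cap $\{z>c\}$ on the sphere and the corresponding band on the cylinder have equal area — which immediately gives that the height of a uniform point is uniform on $(-1,1)$ and, by the rotational symmetry about the axis, independent of a uniform azimuthal angle; but the coordinate computation has the advantage of being self-contained.
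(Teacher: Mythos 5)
Your proof is correct, but note that the paper itself offers no proof of this proposition at all: it is stated as the probabilistic formalization of Archimedes' equal-area theorem, with the geometric fact simply cited (Heath's translation and the discussion in Apostol--Mnatsakanian). Your coordinate argument is therefore a self-contained substitute for that citation, and it is sound: the tangent vectors of $\Phi(z,\theta)=\bigl(z,\sqrt{1-z^2}\cos\theta,\sqrt{1-z^2}\sin\theta\bigr)$ are indeed orthogonal with $|\Phi_z|^2=1/(1-z^2)$ and $|\Phi_\theta|^2=1-z^2$, so the Jacobian factor $|\Phi_z\times\Phi_\theta|$ is identically $1$, the pushforward of the density $\tfrac{1}{4\pi}$ on the rectangle is the normalized surface measure, and your handling of the excluded poles and seam (a set of surface measure zero) closes the only possible gap. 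What the computation buys is exactly what you say: it rederives, rather than assumes, Archimedes' cylinder-to-sphere area-preservation, so the proposition rests on nothing beyond the change-of-variables formula for surface integrals. The alternative you sketch at the end --- deducing uniformity of the height from the equal-area property of caps, plus rotational symmetry for the independence of the angle --- is closer in spirit to how the paper frames the result, but as you note it leans on the classical theorem rather than proving it.
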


In this article, we take Proposition \ref{proparc} as a starting point
for a discussion of characterizations of the centered Gaussian distribution
which arise in Stein's method of distributional approximation.
This discussion culminates in Theorem \ref{thmnorm} at the end of this section.
We then generalize some of these results in Section \ref{bgasec}
to obtain the characterization of the gamma distribution found 
in Proposition \ref{prop2a}, and also mention an analog of Theorem \ref{thmnorm} for the exponential distribution.
We conclude in Section \ref{secf} with a discussion of some related literature.

To move from Archimedes' result above to 
characterizing the Gaussian distribution, we 
state the following result which
was first realized by the astronomer Herschel and 
made well known by the physicist Maxwell in his 
study of the velocities of a large number of gas particles in  
a container; see the introduction of \cite{bry95}. 
\begin{proposition}\label{propmax}
Let $\textbf{X}=(X_1, X_2, X_3)$ be a vector of independent and identically 
distributed (i.i.d.) 
random variables. Then 
$X_1$ has a mean zero Gaussian distribution if and only if for all 
rotations $R:\IR^3\to\IR^3$, $R\textbf{X}$ has the same distribution as $\textbf{X}$.
\end{proposition}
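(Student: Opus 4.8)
The plan is to treat the two implications separately, the real content being the direction in which rotation invariance of $\mathbf{X}$ forces the common law of the coordinates to be Gaussian (the classical Herschel--Maxwell argument). The reverse implication is immediate: if each $X_j\sim\N(0,\sigma^2)$, then $\mathbf{X}$ has joint density proportional to $\e^{-|x|^2/(2\sigma^2)}$, a function of the Euclidean norm $|x|$ alone, and since every rotation preserves $|x|$ and has unit Jacobian we obtain $R\mathbf{X}\eqlaw\mathbf{X}$ for all rotations $R$ (the case $\sigma=0$ being the trivial degenerate one).

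For the substantive direction I would work with characteristic functions, which avoids assuming that a density exists. Let $\varphi(t)=\IE[\e^{\mathrm{i}tX_1}]$ be the common characteristic function of the i.i.d.\ coordinates. By independence the joint characteristic function factors as $\IE[\e^{\mathrm{i}\langle t,\mathbf{X}\rangle}]=\varphi(t_1)\varphi(t_2)\varphi(t_3)$, while the hypothesis $R\mathbf{X}\eqlaw\mathbf{X}$ for all rotations $R$ forces this joint function to be invariant under $t\mapsto R^{\top}t$, hence to depend on $t$ only through $|t|$. Setting two of the three arguments to zero and using $\varphi(0)=1$ identifies the radial profile, and then setting a single argument to zero yields the functional equation
\[
\varphi(t_1)\varphi(t_2)=\varphi\bigl(\sqrt{t_1^2+t_2^2}\,\bigr).
\]

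It remains to solve this equation. First, the rotation by angle $\pi$ about the third coordinate axis gives $(X_1,X_2,X_3)\eqlaw(-X_1,-X_2,X_3)$, so $X_1\eqlaw-X_1$; thus $\varphi$ is real and even, and in particular $X_1$ is symmetric, which delivers the mean-zero conclusion. Next, taking $t_1=t_2=t/\sqrt2$ shows $\varphi(t)=\varphi(t/\sqrt2)^2\ge0$, and if $\varphi$ vanished at some $t_0$ it would vanish at $t_02^{-n/2}\to0$ for every $n$, contradicting continuity together with $\varphi(0)=1$; hence $\varphi>0$ everywhere. Therefore $g(s):=\log\varphi(\sqrt s)$ is a continuous function on $[0,\infty)$ with $g(0)=0$ satisfying the Cauchy equation $g(s_1+s_2)=g(s_1)+g(s_2)$, whose only continuous solution is $g(s)=-cs$. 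Thus $\varphi(t)=\e^{-ct^2}$ with $c$ real, and $|\varphi|\le1$ forces $c\ge0$, so $\varphi$ is the characteristic function of $\N(0,2c)$, as required.

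The main obstacle is the regularity input needed to pin down the Cauchy equation: additive functions can be wildly pathological, so it is essential to exploit the continuity of characteristic functions (together with the positivity established above, which makes $\log\varphi$ globally well defined) to conclude that $g$ is linear. A secondary point to handle cleanly is the passage from ``$R\mathbf{X}\eqlaw\mathbf{X}$ for all rotations'' to rotational invariance of the joint characteristic function, and the bookkeeping that reduces the three-variable identity to the displayed two-variable equation; both are routine once the uniqueness theorem for characteristic functions is invoked.
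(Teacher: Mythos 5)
Your proof is correct, but there is nothing in the paper to compare it against: the paper never proves Proposition \ref{propmax}. It is stated there as a classical result of Herschel and Maxwell, with a pointer to the introduction of \cite{bry95}, and the only characterization the paper actually proves is the related Proposition \ref{proparcmax} (in Section \ref{secf}). Your argument is the standard Herschel--Maxwell proof: pass to characteristic functions, use rotation invariance of the joint characteristic function to reduce to the functional equation $\varphi(t_1)\varphi(t_2)=\varphi\bigl(\sqrt{t_1^2+t_2^2}\,\bigr)$, rule out zeros of $\varphi$, and solve the resulting Cauchy equation for $\log\varphi(\sqrt{s})$ using continuity. All steps are sound, including the two points most often fumbled: positivity of $\varphi$ before taking logarithms, and the continuity input needed to exclude pathological additive solutions. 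It is instructive to contrast this with the paper's proof of Proposition \ref{proparcmax}: there, because only the single identity $X_1\ed V\sqrt{X_1^2+X_2^2+X_3^2}$ is assumed rather than full rotation invariance, the authors work with the Laplace transform of the non-negative squared variables, derive an integral equation, and differentiate it to obtain a first-order ODE; your characteristic-function route exploits the much stronger hypothesis available here and is correspondingly shorter, and it requires no positivity or moment assumptions. Two cosmetic remarks: evenness and realness of $\varphi$ already follow from the functional equation by setting $t_2=0$ (so the rotation-by-$\pi$ step is a convenience, not a necessity), and, as you note in the easy direction, the degenerate law concentrated at $0$ is rotation invariant, so the statement must be read as admitting the degenerate ($\sigma=0$) Gaussian --- your solution $\varphi(t)=\e^{-ct^2}$ with $c\geq0$ handles this uniformly.
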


Propositions \ref{proparc} and \ref{propmax} are
related by the following observations.
It is clear that if $\textbf{X}$ is an $\IR^3/\{0\}$ valued
random vector such that $R \textbf{X}$ has the same distribution as $\textbf{X}$ for 
all rotations $R$, then $\textbf{X}/\norm{\textbf{X}}$ 
is a rotation invariant distribution on the surface of the two dimensional unit sphere 
and is independent of $\norm{\textbf{X}}:=\sqrt{X_1^2+X_2^2+X_3^2}$.
Since the unique rotation invariant distribution on the surface of a sphere of any dimension
is the uniform distribution (Theorem 4.1.2 of \cite{bry95}), the
propositions of Archimedes and Herschel-Maxwell
suggest the following characterization of mean zero Gaussian distributions; we provide a proof and discussion
of generalizations in the Appendix.
\begin{proposition}\label{proparcmax}
Let $\textbf{X}=(X_1, X_2, X_3)$ be a vector of i.i.d.\ random variables. Then
$X_1$ has a mean zero Gaussian distribution if and only if for $V$ uniform on $(-1,1)$ and independent of $\textbf{X}$,
\ba{
X_1\ed V\sqrt{X_1^2+X_2^2+X_3^2}.
}
\end{proposition}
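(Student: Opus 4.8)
The plan is to prove the two implications separately, drawing on Proposition \ref{propmax}, Proposition \ref{proparc}, and the observation recorded just before the statement. For the forward direction, suppose $X_1$ is mean zero Gaussian. Since the $X_i$ are i.i.d., Proposition \ref{propmax} gives $R\textbf{X}\ed\textbf{X}$ for every rotation $R$, so by the observation above together with Theorem 4.1.2 of \cite{bry95} the direction $\textbf{X}/\norm{\textbf{X}}$ is uniform on the unit sphere and independent of $\norm{\textbf{X}}$. By Proposition \ref{proparc} the first coordinate of a uniform point on the sphere has the law of $V$, whence $X_1/\norm{\textbf{X}}\ed V$ with this ratio independent of $\norm{\textbf{X}}$; multiplying through by $\norm{\textbf{X}}$ yields $X_1\ed V\norm{\textbf{X}}$ with $V$ independent of $\textbf{X}$, as claimed (the degenerate law $X_1\equiv 0$ being included trivially).

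The reverse direction is the substantial one, and I would begin by recasting the hypothesis analytically. Because $V\ed -V$, the identity forces $X_1$ to be symmetric, so its characteristic function $\varphi(t):=\IE[\e^{\mathrm{i}tX_1}]$ is real and even. Now let $U=(U_1,U_2,U_3)$ be uniform on the unit sphere, independent of $\textbf{X}$; conditioning on $\textbf{X}$ and using rotation invariance of the law of $U$ gives $\langle U,\textbf{X}\rangle\ed\norm{\textbf{X}}\,U_1$, and since $U_1\ed V$ by Proposition \ref{proparc} this equals $\norm{\textbf{X}}\,V\ed X_1$ in law. Taking characteristic functions of $\langle U,\textbf{X}\rangle\ed X_1$ and using independence of the coordinates of $\textbf{X}$ produces the functional equation
\[
\varphi(t)=\IE\bkle{\varphi(tU_1)\varphi(tU_2)\varphi(tU_3)}\qquad\text{for all }t\in\IR.
\]

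It then remains to show the only solutions are $\varphi(t)=\e^{-\sigma^2t^2/2}$, i.e.\ $X_1\sim\N(0,\sigma^2)$. Granting for the moment that $X_1$ has moments of every order, set $m_{2k}=\IE[X_1^{2k}]$ (odd moments vanishing by symmetry). Since $\IE[V^{2k}]=1/(2k+1)$, the identity $X_1\ed V\norm{\textbf{X}}$ reads $(2k+1)m_{2k}=\IE[(X_1^2+X_2^2+X_3^2)^k]$, and expanding the right-hand side by the multinomial theorem and independence isolates the term $3m_{2k}$ together with a polynomial in the lower moments $m_{2j}$, $j<k$. Hence each $m_{2k}$ with $k\geq2$ is determined by $m_2,\dots,m_{2k-2}$; for example $m_4=3m_2^2$. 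Since the mean zero Gaussian of variance $m_2$ already satisfies the hypothesis by the forward direction, its moments obey the same recursion with the same initial value, so $X_1$ and $\N(0,m_2)$ share all moments; the Gaussian moment problem being determinate, $X_1\sim\N(0,m_2)$.

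The main obstacle is precisely the parenthetical assumption: the hypothesis does not visibly force any moment to be finite, as the relation $\IE\abs{X_1}^p=\IE\abs{V}^p\,\IE\norm{\textbf{X}}^p$ is self-consistent for every $p$, so the recursion cannot be started for free. I would resolve this by extracting regularity of $\varphi$ from the functional equation directly---bounding $1-\varphi(t)$ against its spherical average to show $\varphi$ is twice differentiable at the origin, giving $m_2<\infty$, and then bootstrapping through the recursion to all moments. A cleaner alternative, should it succeed, is to solve the functional equation without moments, exploiting that it equates the value of the product characteristic function at $te_1$ with its average over the sphere of radius $\abs{t}$; the qualitative reason no heavy-tailed law survives---for a symmetric $\alpha$-stable candidate one needs $\abs{U_1}^\alpha+\abs{U_2}^\alpha+\abs{U_3}^\alpha$ constant on the sphere, which happens only for $\alpha=2$---is what one must make rigorous.
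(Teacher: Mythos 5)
Your forward direction is fine (it is essentially the Archimedes--Maxwell argument the paper sketches in its introduction), and your functional equation $\varphi(t)=\IE[\varphi(tU_1)\varphi(tU_2)\varphi(tU_3)]$ and moment recursion are correct \emph{conditional on all moments being finite}. But the gap you flag at the end is the entire difficulty of the converse, and your proposal leaves it open. Worse, it is actually two gaps, not one. First, you have no proof that $\IE X_1^2<\infty$; your suggested fix (comparing $1-\varphi(t)$ with its spherical average to get twice differentiability at $0$) is only a gesture, and as you yourself note, the naive truncation argument is exactly self-consistent because $3\,\IE V^2=1$. Second, even granting $m_2<\infty$, ``bootstrapping through the recursion to all moments'' does not work: by Tonelli the identity $(2k+1)m_{2k}=\IE[(X_1^2+X_2^2+X_3^2)^k]=3m_{2k}+P(m_2,\dots,m_{2k-2})$ holds in $[0,\infty]$ whether or not $m_{2k}$ is finite, and when $m_{2k}=\infty$ it reads $\infty=\infty$; finiteness of the lower moments never forces finiteness of $m_{2k}$, so the recursion cannot be started at any order that you have not separately proved finite. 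Your stable-law remark rules out only stable candidates, not general heavy tails. Tellingly, the paper itself concedes this exact point in its discussion of Conjecture \ref{c1}: the moment recursion settles that conjecture only ``under the assumption that $Y_1$ has all positive integer moments finite,'' and removing such assumptions is precisely what a proof must accomplish.

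The idea you are missing --- and which makes the paper's proof work with no moment hypotheses at all --- is to square everything and use Laplace transforms instead of characteristic functions. Writing $Y_i=X_i^2\geq 0$ and $B=V^2\ed B_{1/2,1}$, your hypothesis together with the symmetry of $X_1$ (which you already established) is equivalent to $Y_1\ed B\,(Y_1+Y_2+Y_3)$ with $B$ independent of the $Y_i$. For non-negative variables the Laplace transform $\phi(\lambda)=\IE e^{-\lambda Y_1}$ exists and is automatically smooth on $(0,\infty)$, so no regularity needs to be extracted from the equation. Conditioning on $B$ gives $\phi(\lambda)=\IE\bigl[\phi(B\lambda)^3\bigr]$; writing out the beta density and substituting $t=u\lambda$ turns this into an integral equation, which upon differentiation yields the ODE $\phi'(\lambda)=\bigl(\phi(\lambda)^3-\phi(\lambda)\bigr)/(2\lambda)$ with $\phi(0)=1$. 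Its solutions are exactly $\phi(\lambda)=(1+c\lambda)^{-1/2}$, i.e.\ $Y_1\ed c\,G_{1/2}$, which together with symmetry gives $X_1$ centered Gaussian. In other words, the paper carries out exactly your ``cleaner alternative, should it succeed'' --- solving the fixed-point equation on the transform side without moments --- and the trick that makes it succeed is choosing the transform ($\IE e^{-\lambda X_1^2}$ rather than $\IE e^{\mathrm{i}tX_1}$) whose smoothness comes for free.
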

Here and in what follows, $\ed$ denotes equality in distribution of two random variables.
The distribution of $\sqrt{X_1^2+X_2^2+X_3^2}$, where $X_1, X_2, X_3$  are independent standard normal variables,
is referred to as the Maxwell or Maxwell-Boltzman distribution; see page 453 of \cite{jkb94}.

Proposition \ref{proparcmax} characterizes
centered Gaussian distributions as the one parameter scale family
of fixed points of the distributional transformation
which takes the distribution of a random variable $X$ 
to the distribution of $V\sqrt{X_1^2+X_2^2+X_3^2}$, where $X_1, X_2, X_3$ are i.i.d.\ copies of $X$, and
$V$ is uniform on $(-1,1)$ independent of $(X_1, X_2, X_3)$.  Such characterizations
of distributions as the unique fixed point of a transformation are often
used in Stein's method for distributional approximation (see \cite{ros11} for an introduction).
In the case of the Gaussian distribution, these transformations are put
to use
through Stein's Lemma.
\begin{lemma}[Stein's Lemma]\cite{stn72}\label{lemstein}
A random variable $W$ has the mean zero, variance one Gaussian distribution 
if and only if for all absolutely continuous functions $f$ with bounded derivative,
\ba{
\IE f'(W)=\IE Wf(W). 
}
\end{lemma}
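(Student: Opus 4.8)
The plan is to prove both directions of Stein's characterization. For the forward direction, I would assume $W \sim \N(0,1)$ with density $\phi(w) = \frac{1}{\sqrt{2\pi}}\e^{-w^2/2}$ and verify the identity directly by integration by parts. The key observation is the differential equation $\phi'(w) = -w\phi(w)$, so that for any absolutely continuous $f$ with bounded derivative,
\ba{
\IE f'(W) = \int_{-\infty}^\infty f'(w)\phi(w)\,dw = -\int_{-\infty}^\infty f(w)\phi'(w)\,dw = \int_{-\infty}^\infty wf(w)\phi(w)\,dw = \IE Wf(W),
}
where the boundary terms vanish because $\phi(w) \to 0$ as $w \to \pm\infty$ faster than any polynomial and $f$ grows at most linearly (a consequence of its bounded derivative). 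Care is needed to justify that $\phi(w)f(w) \to 0$ at the endpoints, but this follows from the linear growth bound on $f$ against the Gaussian tail.

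For the reverse direction, I would suppose $W$ satisfies $\IE f'(W) = \IE Wf(W)$ for all such $f$ and deduce that $W$ must be standard Gaussian. The standard approach is to fix an arbitrary bounded test function $h$ (say with $\IE|h(W)| < \infty$) and solve the \emph{Stein equation}
\ba{
f'(w) - wf(w) = h(w) - \IE h(Z),
}
where $Z \sim \N(0,1)$. One writes down the explicit solution $f_h(w) = \e^{w^2/2}\int_{-\infty}^w (h(x) - \IE h(Z))\e^{-x^2/2}\,dx$, checks that it is absolutely continuous with bounded derivative, and then plugs it into the hypothesis. Since the left-hand side $\IE[f_h'(W) - Wf_h(W)]$ vanishes by assumption, we obtain $\IE h(W) = \IE h(Z)$ for every such $h$, which forces $W \eqlaw Z$.

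The main obstacle, and the step requiring the most care, is verifying that the solution $f_h$ to the Stein equation genuinely has a \emph{bounded derivative} so that it is a legitimate test function to insert into the hypothesis — the hypothesis only applies to functions in that restricted class, so one cannot conclude anything unless $f_h$ qualifies. This requires bounding $f_h'(w) = wf_h(w) + h(w) - \IE h(Z)$ uniformly in $w$, which in turn rests on controlling the product $w\,\e^{w^2/2}\int_{-\infty}^w(\cdots)\e^{-x^2/2}\,dx$ near $\pm\infty$; the Gaussian tail estimates make this finite, but the bound must be made explicit. An alternative route that sidesteps solving the Stein equation is to take $f(w) = \e^{itw}$ (or its real and imaginary parts) and derive an ordinary differential equation for the characteristic function $\psi(t) = \IE\e^{itW}$, namely $\psi'(t) = -t\psi(t)$, whose unique solution with $\psi(0)=1$ is $\e^{-t^2/2}$, the Gaussian characteristic function; this is cleaner but requires justifying differentiation under the expectation.
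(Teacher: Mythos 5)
Your proposal cannot be compared against a proof in the paper, because the paper gives none: Stein's Lemma is stated there as a quoted result, with the proof deferred entirely to the citation \cite{stn72}. Judged on its own merits, your plan is the standard and correct one. The forward direction by integration by parts against $\phi'(w)=-w\phi(w)$ is sound, and you rightly note that the linear growth of $f$ (forced by the bounded derivative) against the Gaussian tail kills the boundary terms. For the reverse direction, you correctly identify the genuine crux: showing that the solution $f_h$ of the Stein equation is itself an admissible test function. The standard way to complete that step is to use that $\int_{-\infty}^{\infty}\bklr{h(x)-\IE h(Z)}\e^{-x^2/2}\,dx=0$, so that for $w>0$ one may rewrite $f_h(w)=-\e^{w^2/2}\int_w^\infty\bklr{h(x)-\IE h(Z)}\e^{-x^2/2}\,dx$ and then apply the Mills-ratio bound $\e^{w^2/2}\int_w^\infty \e^{-x^2/2}\,dx\leq\min\klr{\sqrt{\pi/2},1/w}$; this yields $\sup_w\abs{wf_h(w)}<\infty$ and hence the bounded derivative you need. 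Two minor points to tighten: (i) take $h$ ranging over bounded \emph{Lipschitz} (or continuous) functions rather than arbitrary bounded measurable ones, both because that class is measure-determining and because for discontinuous $h$ the derivative $f_h'$ is defined only almost everywhere with respect to Lebesgue measure, which causes trouble if $W$ has atoms; (ii) in your alternative characteristic-function route, the differentiation of $\psi$ under the expectation requires $\IE\abs{W}<\infty$, which you can extract from the hypothesis itself by taking $f(w)=w$, giving $\IE W^2=1$. With those details supplied, either route is a complete proof.
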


We can relate the characterizations provided by Proposition \ref{proparcmax}
and Lemma \ref{lemstein}, but first we need the following definition.

\begin{definition}
Let $X$ be a random variable with distribution function $F$ and such that $\mu_\alpha:=\IE \abs{X}^\alpha <\infty$.  
We define $F^{(\alpha)}$, the 
$\alpha$-power bias distribution of $F$, by the relation
\ba{
dF^{(\alpha)}(x)=\frac{\abs{x}^\alpha dF(x)}{\mu_\alpha},
}
and we write $X\t\alpha$ for a random variable having this distribution.
Otherwise put, $X\t\alpha$ has
the $\alpha$-power bias distribution of $X$ if and only if 
for every measurable function $f$ such that $\IE \abs{X}^\alpha \abs{f(X)} <\infty$, 
\ban{
 \IE f(X\t\alpha)= \frac{\IE \abs{X}^\alpha f(X)}{ \IE \abs{X}^\alpha}. \label{propbias} 
}
\end{definition}
Taking $\alpha=1$ and $X\geq0$, $X\t1$ has the \emph{size-biased} distribution
of $X$, a notion which frequently arises in probability theory
and applications \cite{argo11, bro06}.

We can now state and prove the following result which sheds some light on the 
relationship between Proposition \ref{proparcmax} and Lemma \ref{lemstein}.
\begin{lemma}\label{lem1s}
If $W$ is a random variable with finite second moment and
$f$ is an absolutely continuous function with bounded derivative, then
for $V$ uniform on the interval $(-1,1)$ and independent of $W$,
\ban{
2\IE W^2 \IE f'(VW\t2)=\IE Wf(W)-\IE Wf(-W). \label{s1}
}
\end{lemma}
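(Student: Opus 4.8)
The plan is to evaluate the left-hand side of \eqref{s1} directly: first condition on $W\t2$ and integrate out the independent uniform variable $V$, and then convert the resulting expectation back into one involving $W$ by means of the defining property \eqref{propbias} of the power-bias transformation. No clever identity is needed; the whole statement unwinds from the fundamental theorem of calculus together with the $\alpha=2$ instance of the power-bias relation.

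First I would fix a value $w$ of $W\t2$ and use the independence and uniformity of $V$ on $(-1,1)$ to write
\be{\IE\bkle{f'(Vw)}=\frac{1}{2}\int_{-1}^{1}f'(vw)\,dv=\frac{f(w)-f(-w)}{2w},}
the second equality being the substitution $u=vw$ followed by the fundamental theorem of calculus, which applies since $f$ is absolutely continuous. The same formula results whether $w>0$ or $w<0$: reversing the sign of $w$ swaps the limits $-w$ and $w$ but simultaneously flips the sign of $du=w\,dv$, so the two changes cancel. Since $dF\t2(x)$ is proportional to $x^2\,dF(x)$, the variable $W\t2$ carries no mass at the origin, so the quotient is almost surely well defined; and since $f'$ is bounded, Fubini's theorem legitimates interchanging the expectations over $V$ and $W\t2$. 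Taking expectations over $W\t2$ then yields
\be{\IE f'(VW\t2)=\IE\bbkle{\frac{f(W\t2)-f(-W\t2)}{2W\t2}}.}

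To finish, I would apply \eqref{propbias} with $\alpha=2$ to the function $g(w)=\bklr{f(w)-f(-w)}/(2w)$, which gives
\be{\IE f'(VW\t2)=\frac{1}{\IE W^2}\,\IE\bbkle{W^2\cdot\frac{f(W)-f(-W)}{2W}}=\frac{\IE Wf(W)-\IE Wf(-W)}{2\,\IE W^2},}
where the factor $W^2/W=W$ collapses the integrand. Multiplying both sides through by $2\,\IE W^2$ produces exactly \eqref{s1}.

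The argument carries no deep obstacle; the only two points demanding care are the sign bookkeeping in the substitution, handled above by noting that the swap of limits and the sign of $dv$ compensate, and the verification that $W\t2$ almost surely avoids the origin, which is precisely what makes the division by $W\t2$—and hence the application of \eqref{propbias} to $g$—legitimate.
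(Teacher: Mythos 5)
Your proof is correct and follows essentially the same route as the paper's: integrate out the independent uniform variable $V$ via the fundamental theorem of calculus to get $\bklr{f(W\t2)-f(-W\t2)}/(2W\t2)$, then apply the power-bias identity \eqref{propbias} with $\alpha=2$ to collapse the quotient into $\IE Wf(W)-\IE Wf(-W)$ over $2\IE W^2$. The extra care you take with the sign of $w$ in the substitution and with $W\t2$ carrying no mass at the origin fills in details the paper's terse three-line calculation leaves implicit.
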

\begin{proof}
The lemma is implied by the following calculation
\ba{
\IE f'(VW\t2)&=\frac{1}{2}\IE \left[\int_{-1}^1f'(uW\t2) du\right]\\
	&=\frac{1}{2}\IE \left[\frac{f(W\t2)-f(-W\t2)}{W\t2}\right]\\
	&=\frac{\IE Wf(W)- Wf(-W)}{2\IE W^2},	
}
where in the final equality we use \eqref{propbias}.
\end{proof}

We now have the following main result for the Gaussian distribution.

\begin{theorem}\label{thmnorm}
Let $W$ be a random variable with finite second moment.
The following are equivalent:
\begin{enumerate}
\item $W$ has the standard normal distribution.
\item For all absolutely continuous functions $f$ with bounded derivative,
\ban{
\IE f'(W)=\IE Wf(W). \label{1}
}
\item $\IE W^2 =1 $ and $W\ed VW^{(2)}$, where 
V is uniform on $(-1,1)$ and independent of $W^{(2)}$.
\end{enumerate}
\end{theorem}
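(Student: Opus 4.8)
The plan is to establish the cyclic chain of implications $(1)\Rightarrow(3)\Rightarrow(2)\Rightarrow(1)$, which together give the stated equivalence. The implication $(2)\Rightarrow(1)$ is precisely the nontrivial direction of Stein's Lemma (Lemma \ref{lemstein}), which I may invoke verbatim, so the genuine content lies in the two implications involving item (3). In both of these the engine will be the identity \eqref{s1} of Lemma \ref{lem1s}, and the whole argument hinges on one structural observation about the transformation $W\mapsto VW^{(2)}$.

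That observation is that $VW^{(2)}$ is automatically symmetric about $0$: since $V$ is uniform on $(-1,1)$ we have $V\ed -V$, and $V$ is independent of $W^{(2)}$, so $VW^{(2)}\ed(-V)W^{(2)}=-VW^{(2)}$. Consequently, any $W$ satisfying $W\ed VW^{(2)}$ must itself be symmetric, and for a symmetric $W$ one checks at once (by substituting $-W$ for $W$ inside the expectation, which is legitimate as $W\ed -W$) that $\IE Wf(-W)=-\IE Wf(W)$. This is exactly what collapses the right-hand side of \eqref{s1} to a single Stein-type term $2\IE Wf(W)$.

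For $(3)\Rightarrow(2)$ I would substitute the two hypotheses of (3) into \eqref{s1}. The assumption $\IE W^2=1$ turns the left side into $2\IE f'(VW^{(2)})$, and the assumption $W\ed VW^{(2)}$ rewrites this as $2\IE f'(W)$; meanwhile the symmetry of $W$, which follows from (3) by the remark above, turns the right side into $2\IE Wf(W)$. Cancelling the factor $2$ yields exactly \eqref{1} for every absolutely continuous $f$ with bounded derivative, which is item (2).

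For $(1)\Rightarrow(3)$ I would first record that the standard normal has $\IE W^2=1$ and is symmetric, so \eqref{s1} again simplifies its right side to $2\IE Wf(W)$; applying the easy direction of Stein's Lemma, $(1)\Rightarrow(2)$, then rewrites this as $2\IE f'(W)$, giving $\IE f'(VW^{(2)})=\IE f'(W)$ for all absolutely continuous $f$ with bounded derivative. The step I expect to require the most care is the final passage from this family of identities to the distributional statement $VW^{(2)}\ed W$. For this I would observe that as $f$ ranges over the absolutely continuous functions with bounded derivative, $f'$ ranges over all bounded measurable functions (given bounded measurable $g$, the function $f(x)=\int_0^x g(t)\,dt$ is absolutely continuous with bounded derivative $g$ almost everywhere), so the identities read $\IE g(VW^{(2)})=\IE g(W)$ for every bounded measurable $g$, which forces equality in distribution and hence (3).
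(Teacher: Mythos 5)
Your argument is correct, and two of its three legs coincide with the paper's: the paper also obtains $(3)\Rightarrow(2)$ by feeding the hypotheses of (3) into Lemma \ref{lem1s} and using the symmetry of $W$ forced by $W\ed VW^{(2)}$ (your observation that $V\ed -V$ makes $VW^{(2)}$ symmetric is exactly what the paper leaves implicit), and it cites Stein's Lemma \ref{lemstein} for the equivalence of (1) and (2). The genuine divergence is in $(1)\Rightarrow(3)$. The paper proves this implication geometrically: it invokes Proposition \ref{proparcmax} (the Archimedes/Herschel--Maxwell characterization) together with the fact that for i.i.d.\ standard normals the radial part $(X_1^2+X_2^2+X_3^2)^{1/2}$ has density proportional to $x^2e^{-x^2/2}$, so that $W\ed VW^{(2)}$ is literally the projection statement coming from Proposition \ref{proparc}. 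You instead stay entirely inside the Stein framework: Lemma \ref{lem1s} plus the Stein identity for the normal give $\IE f'(VW^{(2)})=\IE f'(W)$ for all admissible $f$, and you upgrade this family of identities to equality in distribution. Your route is more self-contained---it never uses Proposition \ref{proparcmax}, whose proof in the paper requires a separate Laplace-transform argument---whereas the paper's route is chosen deliberately to display the link between Stein's characterization and the Archimedes--Maxwell circle of ideas, which is the theme of the article; your proof would work verbatim with all of that geometry stripped out.

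One technical point deserves a line in your final step: for bounded measurable $g$, the function $f(x)=\int_0^x g(t)\,dt$ satisfies $f'=g$ only Lebesgue-almost everywhere, so to pass from $\IE f'(VW^{(2)})=\IE f'(W)$ to $\IE g(VW^{(2)})=\IE g(W)$ you need the laws of $W$ and of $VW^{(2)}$ to be absolutely continuous. Both are: $W$ is standard normal, and $VW^{(2)}$ is absolutely continuous because $\IP(W^{(2)}=0)=0$ (the $2$-power bias puts no mass at the origin) and $V$ is uniform and independent of $W^{(2)}$. Alternatively, restrict to bounded continuous $g$, for which $f$ is $C^1$ with $f'=g$ everywhere; such $g$ already determine the distribution.
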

\begin{proof}
The equivalence of the first two items of the proposition is (Stein's) Lemma \ref{lemstein} above.  

The fact that Item 1
implies Item 3 follows from Proposition \ref{proparcmax} above coupled with the simple
fact that
for $X_1, X_2, X_3$ i.i.d.\ standard normal random variables,
the density of $(X_1^2+X_2^2+X_3^2)^{1/2}$
is proportional to $x^2 e^{-x^2/2}$ (that is, $(X_1^2+X_2^2+X_3^2)^{1/2}$ has the same distribution as $X_1\t2$).

Finally, we show Item 2 follows from Item 3.
If $W\ed VW\t2$ and $\IE W^2=1$, then using Lemma \ref{lem1s} we find that for functions $f$ with bounded derivative,
\ba{
\IE f'(W)= \IE f'(VW\t2)=\frac{1}{2}\left(\IE Wf(W)-\IE Wf(-W)\right)=\IE Wf(W),
}
where the last equality follows from the assumptions of Item 3 which imply $W$ has the 
same distribution as $-W$.
\end{proof}

\begin{remark}
The equivalence of Items 1 and 3  
is essentially the content of Proposition 2.3 of \cite{cgs10}, which uses
the concept of the ``zero-bias" transformation of Stein's method, first introduced in \cite{gore97}.
For a random variable $W$ with mean zero and variance $\sigma^2<\infty$, we 
say that $W^*$ has the zero-bias distribution of $W$ if
for all $f$ with $\IE\abs{Wf(W)}<\infty$,
\ba{
\sigma^2\IE f'(W^*)= \IE Wf(W).
}
We think of the zero-bias transformation acting on probability distributions with zero mean and finite variance,
and Stein's Lemma implies that this transformation has the centered Gaussian distribution as its unique fixed point.
Proposition 2.3 of \cite{cgs10} 
states that for a random variable $W$ with support symmetric about zero with unit variance,
the transformation $W\to V W^{(2)}$ provides a representation of the zero-bias transformation.
The equivalence of Items~1 and~3 of the theorem follows easily from these results.
\end{remark}

\section{Beta-Gamma Algebra}\label{bgasec}
The equivalence
between Items 1 and 3 in Theorem \ref{thmnorm} 
can be generalized as follows. For $r,s>0$, let $G_r$, and $B_{r,s}$ denote standard gamma and beta 
random
variables having respective densities $\tsfrac{1}{\Gamma(r)}x^{r-1}e^{-x}, x>0$ and
$\tsfrac{\Gamma(r+s)}{\Gamma(r)\Gamma(s)}y^{r-1}(1-y)^{s-1}, 0<y<1$, where $\Gamma$ denotes the gamma function.

\begin{proposition}\label{prop2a}
Fix $p,r,s>0$.
A non-negative random variable $W$ has the distribution 
of $c \,G_r^p$ for some constant $c>0$ if and only if
$W\ed B_{r,s}^p W^{(s/p)}$,
where $B_{r,s}$ is independent of $W\t{s/p}$.
\end{proposition}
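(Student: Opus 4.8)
The plan is to work throughout with the Mellin transform $m(t):=\IE W^t$ and to show that the distributional identity is equivalent, upon taking $f(x)=x^t$, to an explicit functional equation whose only solutions (up to scale) are Mellin transforms of $cG_r^p$. Both the forward direction and the reduction in the converse are routine beta--gamma computations; the real work is a uniqueness statement for that functional equation.

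First, for the ``only if'' direction I would argue by direct computation. For $W=cG_r^p$ the defining relation \eqref{propbias} gives, for every test function $g$,
\[
\IE g\bigl(W^{(s/p)}\bigr)=\frac{\IE\bigl[G_r^{s}\,g(cG_r^p)\bigr]}{\IE G_r^{s}},
\]
and since the $s$-power-bias of $G_r$ has density proportional to $x^{r+s-1}e^{-x}$ one has $G_r^{(s)}\ed G_{r+s}$, whence $W^{(s/p)}\ed cG_{r+s}^p$. The classical beta--gamma identity $B_{r,s}G_{r+s}\ed G_r$, with $B_{r,s}$ independent of $G_{r+s}$, then yields $B_{r,s}^p W^{(s/p)}\ed c\,(B_{r,s}G_{r+s})^p\ed cG_r^p=W$, as required.

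For the converse I would take $f(x)=x^t$ in the identity and use independence, \eqref{propbias}, and the beta moment $\IE B_{r,s}^{pt}=\Gamma(r+s)\Gamma(r+pt)/[\Gamma(r)\Gamma(r+s+pt)]$ to obtain, at every $t$ where the moments are finite,
\[
m(t)=\frac{\Gamma(r+s)\,\Gamma(r+pt)}{\Gamma(r)\,\Gamma(r+s+pt)}\;\frac{m(t+s/p)}{m(s/p)}.
\]
Starting from $m(s/p)=\IE W^{s/p}<\infty$, this recurrence has a finite positive multiplier at each $t=n\,s/p\ge0$, so $m(n\,s/p)<\infty$ for all $n\ge0$; Hölder's inequality (log-convexity of moments) then upgrades this to $m(t)<\infty$ for every $t\ge0$. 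Hence $m$ is finite, log-convex on $(0,\infty)$, and, being a Mellin transform, analytic on the half-plane $\{\operatorname{Re}z>0\}$. Choosing $c>0$ so that the Mellin transform $m^{*}(t):=c^{t}\Gamma(r+pt)/\Gamma(r)$ of $cG_r^p$ satisfies $m^{*}(s/p)=m(s/p)$, one verifies that $m$ and $m^{*}$ obey the same functional equation, so that $D:=m/m^{*}$ is a positive, smooth, $(s/p)$-periodic function with $D(0)=1$.

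The hard part will be ruling out a nonconstant periodic factor $D$, and this is exactly where log-convexity becomes decisive. If $D$ were nonconstant, then $(\log D)''$ would be a nonzero $(s/p)$-periodic function of mean zero, hence would attain some negative value $-\kappa<0$ at a point $t_0$, and, by periodicity, at every $t_0+n\,(s/p)$. On the other hand $(\log m^{*})''(t)=p^{2}\psi'(r+pt)\to0$ as $t\to\infty$, where $\psi'$ is the trigamma function. Evaluating along $t_0+n\,(s/p)\to\infty$ would then give $(\log m)''=(\log D)''+(\log m^{*})''\to-\kappa<0$, contradicting the log-convexity of $m$. Therefore $D\equiv1$, so $m=m^{*}$ on $(0,\infty)$; by analytic continuation the two Mellin transforms agree on a vertical line, and Mellin inversion identifies the law of $W$ with that of $cG_r^p$.
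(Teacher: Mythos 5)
Your proof is correct, and your forward direction matches the paper's (both rest on $G_r^{(s)}\ed G_{r+s}$ and $B_{r,s}G_{r+s}\ed G_r$; you merely inline the computation that the paper delegates to its Lemma \ref{bipo}, $(X\t\alpha)^\beta \ed (X^\beta)\t{\alpha/\beta}$). The converse, which is the hard direction, goes by a genuinely different route. The paper sets $X=W^{1/p}$, reduces to $X\ed B_{r,s}X\t{s}$, extracts the recursion $a_k=\IE B_{r,s}^{ks}\,a_{k+1}/a_1$ for $a_k=\IE X^{ks}$, solves it in closed form, and identifies the law by \emph{moment determinacy}: it verifies Carleman's condition and exhibits a scaled $G_r^s$ with the same moment sequence. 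Your bootstrapping of moment finiteness (iterate the identity to get $m(ns/p)<\infty$, then H\"older) is the same as theirs, but from there you avoid determinacy theory entirely: you recast the identity as a functional equation for the Mellin transform, observe that the ratio $D=m/m^{*}$ against the target transform is $(s/p)$-periodic, and rule out a nonconstant periodic factor by playing the log-convexity of $t\mapsto\IE W^{t}$ against the decay of $p^{2}\psi'(r+pt)$ --- a Bohr--Mollerup-style argument --- finishing with analytic continuation and Mellin inversion. What your approach buys: it explains structurally why uniqueness holds (a solution of the functional equation is determined up to a periodic factor, and periodicity is incompatible with convexity at infinity), and it needs no moment-determinacy input. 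What the paper's buys: it works with integer moments only and a standard citation, with none of your analyticity and smoothness bookkeeping. Two small points to tighten: pin the constant down via $D(s/p)=1$, which is forced by your choice of $c$, rather than $D(0)=1$, since continuity of $m$ at $0$ requires first noting $W>0$ a.s.\ (true here, because $B_{r,s}^p W\t{s/p}>0$ a.s., but unstated); and in the last step it is worth saying explicitly that equality of $m$ and $m^{*}$ on a vertical line $\{\mathrm{Re}\,z=a\}$, $a>0$, gives equality of the characteristic functions of $\log$ of the $a$-power-biased laws, whence $W\ed cG_r^p$ because power biasing is invertible --- standard, but it is the step where ``Mellin inversion'' actually operates.
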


\begin{remark}
The equivalence in Items 1 and 3 of Theorem \ref{thmnorm}  follows
by taking  $p=r=1/2, s=1$ in Proposition \ref{prop2a}
and using the well known fact that for $Z$ having the standard
normal distribution, $Z^2 \ed 2 G_{1/2}$.
\end{remark}

The proof of Proposition \ref{prop2a} uses the following result.

\begin{lemma}\label{bipo}
Let $\alpha, \beta>0$.  If $X\geq0$ is a random variable such that 
$\IE X^\alpha<\infty$, then
\ba{
(X\t\alpha)^\beta\ed (X^\beta)\t{\alpha/\beta}.
} 
\end{lemma}
\begin{proof}
By the definition of $\alpha/\beta$-power biasing, we only need to show that
\ban{
\IE X^\alpha \IE f((X\t\alpha)^\beta)= \IE X^\alpha f(X^\beta) \label{bi1}
}
for all $f$ such that the expectation on the left hand side exists.  By the definition of
$\alpha$-power biasing, we have that for $g(t)=f(t^\beta)$,
\ba{
\IE X^\alpha \IE g(X\t\alpha) = \IE X^\alpha g(X),
}
which is \eq{bi1}.
\end{proof}

\begin{proof}[Proof of Proposition \ref{prop2a}]
The usual beta-gamma algebra (see \cite{duf98}) implies that
$G_r \ed B_{r,s} G_{r+s}$ where $B_{r,s}$ and $G_{r+s}$ are independent.
Using the elementary fact that
$G_{r+s} \ed G_r^{(s)}$, we
find that for fixed $r,s > 0$, $G_r$ satisfies
$G_r \ed B_{r,s} G_r^{(s)}$. 
Now applying
Lemma \ref{bipo} to $G_r$ with $\alpha=s$ and $\beta=p$,
we have that $W = G_r^p$ satisfies
$W \ed B_{r,s}^p W\t{s/p}$ and the forward implication now follows after noting that $(cX)\t\alpha\ed cX\t\alpha$

Now, assume that $W\ed  B_{r,s}^p W ^{(s/p)}$ for fixed $p,r,s>0$ and we show that $W\ed c\,G_r^p$ for some $c>0$.
First, note by Lemma \ref{bipo}, that if $X=W^{1/p}$, then 
\ban{
X\ed B_{r,s} X\t{s} \label{r1}
}
and we will be done if this implies that $X\ed G_r$.  
Note that by writing $X\t{s}$, we have been tacitly assuming that $  \IE W^{s/p}=\IE X^s<\infty$, which implies
that $\IE (B_{r,s} X\t{s})^s<\infty$ so that using the definition of power biasing yields
$\IE X^{2s} <\infty$.  Continuing in this way we find
that $\IE X^{ks} <\infty$ for all $k=1,2,\ldots$ and thus that $\IE X^p <\infty$ for all $p\geq s$.
Moreover, writing $a_k:=\IE X^{ks}$, and taking expectations in \eq{r1} after raising both sides to the power $k$, we have
\ba{
a_{k}=\IE B_{r,s}^{ks} \frac{a_{k+1}}{a_1},
}
where we have again used the definition of power biasing.  We can solve this recursion
after noting that for $\alpha>-r$,
\ba{
\IE B_{r,s}^{\alpha}=\frac{\Gamma(r+\alpha)\Gamma(r+s)}{\Gamma(r+\alpha+s)\Gamma(r)},
}
to find that for $k=0,1,\ldots,$ 
\ba{
a_{k}=\left(\frac{a_1\Gamma(r)}{\Gamma(r+s)}\right)^k \frac{\Gamma(r+sk)}{\Gamma(r)}.
}
For any value of $a_1>0$, it is easy to see using Stirling's formula that the sequence $(a_k)_{k\geq1}$ satisfies Carleman's condition
\ba{
\sum_{k=1}^n a_{2k}^{-1/2k}  \to \infty, \mbox{\, as \,} n\to\infty,
}
so that for a given value of $a_1$, there is exactly one probability distribution having
moment sequence $(a_k)_{k\geq1}$ (see the remark following Theorem (3.11) in Chapter 2 of \cite{dur96}).
Finally, it is easy to see that the random variable
\ba{
X^s:= \frac{a_1\Gamma(r)}{\Gamma(r+s)} G_r^s
}
has moment sequence $(a_k)_{k\geq1}$.
\end{proof}

\subsection{Exponential Distribution}
The exponential
distribution has many characterizing properties, many of which stem from its relation to Poisson processes.
For example, by
superimposing two independent Poisson processes into one, we easily find that
if $Z_1$ and $Z_2$ are independent rate one exponential variables,
then 2$\min\{Z_1, Z_2\}$ is also a rate one exponential (this is in fact characterizing as shown in Theorem 3.4.1 of \cite{bry95}). 

For our framework above, we use the memoryless property of the exponential distribution
in the context of renewal theory.  
In greater detail, for any non-negative random variable $X$, we define the
renewal sequence generated from $X$ as $(S_1, S_2, \ldots)$, where $S_i=\sum_{k=1}^i X_k$ and
the $X_k$ are i.i.d.\ copies of $X$.   For a fixed $t>0$, the distribution of the
length of the interval $[S_{K_t}, S_{K_t+1}]$ containing $t$ and the position of $t$ in this interval
depend on $t$
and the distribution of $X$ in some rather complicated
way.  
We can remove this dependence on $t$ by starting the sequence in ``stationary" meaning
that we look instead at the sequence $(X', X'+S_1, \ldots)$, where $X'$ has 
the limiting distribution of $S_{K_t+1}-t$ as $t$ goes to infinity; see Chapter 5, Sections 6 and 7.b of \cite{kata75}.

If $X$ is a continuous distribution with finite mean, then the 
distribution of $X'$ is the size-biased distribution of $X$ times an independent variable which is uniform on $(0,1)$ \cite{kata75}.
Heuristically, the memoryless property which characterizes the exponential distribution (Chapter 12 of \cite{baba95})
implies that the renewal sequence generated by an exponential distribution is stationary (that is, $X$ and $X'$ have the
same distribution) and vice versa.  The
following result implies this intuition is correct.

\begin{theorem}\label{thmexp}\cite{rope10}
Let $W$ be a non-negative random variable with finite mean.
The following are equivalent:
\begin{enumerate}
\item $W$ has the exponential distribution with mean one.
\item For all absolutely continuous functions $f$ with bounded derivative,
\ba{
\IE f'(W)=\IE f(W)-f(0). 
}
\item $\IE W =1 $ and $W\ed UW\t1$, where 
U is uniform on $(0,1)$ and independent of $W\t1$.
\end{enumerate}
\end{theorem}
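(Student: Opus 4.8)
The plan is to mirror the proof of Theorem \ref{thmnorm}, with the beta-gamma identity of Proposition \ref{prop2a} playing the role of Archimedes--Maxwell and a size-bias computation playing the role of Lemma \ref{lem1s}. The first point is that the equivalence of Items~1 and~3 is essentially immediate from Proposition \ref{prop2a}. Specializing that proposition to $p=r=s=1$ and using that $B_{1,1}$ has the uniform density on $(0,1)$ (so $B_{1,1}\ed U$), it asserts that a non-negative $W$ satisfies $W\ed U W\t1$ if and only if $W\ed c\,G_1$ for some $c>0$. Since $\IE G_1=1$, the extra normalization $\IE W=1$ in Item~3 forces $c=1$, so $W$ is standard exponential; this gives $1\Leftrightarrow 3$.

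Next I would bring in Item~2 through the exponential analogue of Lemma \ref{lem1s}: for $U$ uniform on $(0,1)$ independent of a non-negative $W$ with $\IE W<\infty$, and for $f$ absolutely continuous with bounded derivative,
\ban{
\IE W\,\IE f'(U W\t1)=\IE f(W)-f(0).\label{expanalog}
}
This is proved exactly as Lemma \ref{lem1s}: write $\IE f'(U W\t1)=\IE\int_0^1 f'(uW\t1)\,du=\IE\bigl[(f(W\t1)-f(0))/W\t1\bigr]$ and apply the size-bias relation \eqref{propbias} with $\alpha=1$. All expectations are finite because $|f(W\t1)-f(0)|\le (\sup_x|f'(x)|)\,W\t1$. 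Under the hypotheses of Item~3 the left side of \eqref{expanalog} equals $\IE f'(W)$ while $\IE W=1$, so \eqref{expanalog} collapses to $\IE f'(W)=\IE f(W)-f(0)$, which is Item~2; hence $3\Rightarrow 2$.

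It then remains to close the cycle with $2\Rightarrow 1$, which I expect to be the only real obstacle, since it is the genuine reverse Stein characterization rather than a direct computation. The most economical route is via characteristic functions: apply Item~2 to the real and imaginary parts of $f(x)=e^{itx}$—each absolutely continuous with bounded derivative—and set $\phi(t)=\IE e^{itW}$. Since $f'(x)=it\,e^{itx}$ and $f(0)=1$, Item~2 gives $it\,\phi(t)=\phi(t)-1$, hence $\phi(t)=(1-it)^{-1}$, the characteristic function of the mean-one exponential distribution. Uniqueness of characteristic functions then yields Item~1, closing the cycle $1\Leftrightarrow 3\Rightarrow 2\Rightarrow 1$ and establishing all equivalences; as a sanity check, the forward direction $1\Rightarrow 2$ is also immediate by integrating $f'$ against the density $e^{-x}$, the boundary term $f(x)e^{-x}$ vanishing at infinity because $f$ grows at most linearly.
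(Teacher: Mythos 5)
Your proposal is correct, and two of its three legs coincide with the paper's proof: the equivalence of Items~1 and~3 via Proposition~\ref{prop2a} with $p=r=s=1$ (together with $B_{1,1}\ed U$ and the normalization $\IE W=1$ fixing the scale), and the implication $3\Rightarrow 2$ via your identity \eqref{expanalog}, which is exactly the paper's Lemma~\ref{lem2s} (stated there without proof; your derivation is the intended one, mirroring Lemma~\ref{lem1s}). Where you genuinely diverge is in closing the cycle. The paper never proves $2\Rightarrow 1$ directly; instead it gets $2\Rightarrow 3$ from Lemma~\ref{lem2s}: Item~2 with $f(x)=x$ gives $\IE W=1$ (the paper misprints this as $f(x)=1$), and then the lemma yields $\IE f'(W)=\IE f'(UW\t1)$ for all admissible $f$, which forces $W\ed UW\t1$ because derivatives of such $f$ exhaust the bounded continuous functions --- a distribution-determining class. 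This identification step is left implicit in the paper's ``can be read from.'' You instead prove $2\Rightarrow 1$ by plugging the real and imaginary parts of $e^{itx}$ into Item~2 to get $it\,\phi(t)=\phi(t)-1$, hence $\phi(t)=(1-it)^{-1}$, and invoke uniqueness of characteristic functions. Your route is more explicit and self-contained (no appeal to a distribution-determining family of test functions), and it identifies the exponential law in one stroke; the paper's route stays entirely within the real test-function framework, uses the lemma for both directions at once, and establishes the $2\Leftrightarrow 3$ equivalence as stated rather than through Item~1. Both are valid; yours trades the soft identification argument for a Fourier computation that is special to the linear structure of this particular Stein identity.
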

Similar to the case of the normal distribution, 
the crucial link between Items 2 and 3 of Theorem \ref{thmexp} is provided by the following
lemma; the proof is similar to that of Lemma \ref{lem1s}.
\begin{lemma}\label{lem2s}
If $W$ is a non-negative random variable with finite mean and
$f$ is an absolutely continuous function with bounded derivative, then
\ba{
\IE W\IE f'(UW\t1)=\IE f(W)-f(0). 
}
\end{lemma}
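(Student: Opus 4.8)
The plan is to imitate the proof of Lemma~\ref{lem1s} almost verbatim, the only changes being that the uniform variable now lives on $(0,1)$ rather than $(-1,1)$ and that we power-bias with exponent $\alpha=1$ instead of $\alpha=2$. First I would condition on $W\t1$ and integrate out $U$, writing
\ba{
\IE f'(UW\t1)=\IE\left[\int_0^1 f'(uW\t1)\,du\right].
}
Since $f$ is absolutely continuous, the inner integral telescopes by the fundamental theorem of calculus: the substitution $y=uW\t1$ gives $\int_0^1 f'(uW\t1)\,du=\bklr{f(W\t1)-f(0)}/W\t1$.

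Next I would recognize the resulting integrand as the bounded function
\ba{
h(t):=\frac{f(t)-f(0)}{t}=\int_0^1 f'(ut)\,du,\qquad \abs{h(t)}\leq\supnorm{f'},
}
and invoke the size-biasing identity \eqref{propbias} with $\alpha=1$ and $g=h$. Because $\IE W\cdot h(W)=\IE\bklr{f(W)-f(0)}=\IE f(W)-f(0)$, this yields
\ba{
\IE f'(UW\t1)=\IE h(W\t1)=\frac{\IE f(W)-f(0)}{\IE W},
}
and multiplying through by $\IE W$ gives the claim.

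The only point requiring care is the division by $W\t1$, which a priori could be problematic near the origin; this is exactly where the main (mild) obstacle lies. It is resolved by two observations: first, size-biasing by $\abs{x}^1$ annihilates any atom of $W$ at $0$, so $W\t1>0$ almost surely and the quotient is well defined; second, and more cleanly, the combined function $h(t)=(f(t)-f(0))/t$ extends continuously to $t=0$ with value $f'(0)$ and is uniformly bounded by $\supnorm{f'}$, so no integrability issue arises and \eqref{propbias} applies directly since $\IE\abs{W}\abs{h(W)}\leq\supnorm{f'}\,\IE W<\infty$. I expect the write-up to be even shorter than that of Lemma~\ref{lem1s}, since working on $(0,1)$ avoids the symmetrization that produced the $f(W)-f(-W)$ difference there.
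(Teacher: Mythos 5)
Your proof is correct and is exactly the argument the paper intends: the paper gives no separate proof of this lemma, saying only that ``the proof is similar to that of Lemma~\ref{lem1s}'', and your write-up is precisely that adaptation (integrate out $U$, recognize $h(t)=(f(t)-f(0))/t$, and apply \eqref{propbias} with $\alpha=1$). Your added care at $t=0$ --- noting that $h$ extends boundedly with value $f'(0)$ and that size-biasing removes any atom of $W$ at zero --- addresses a point the paper leaves implicit, and is a sound refinement rather than a deviation.
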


\begin{proof}[Proof of Theorem \ref{thmexp}]
The equivalence of Items 1 and 3 is a special
case of Theorem \ref{thmexp} with $r=s=p=1$,
and the equivalence of Items 2 and 3 can be read from 
Lemma \ref{lem2s} (note in particular that Item 2 with $f(x)=1$ implies
that $\IE W=1$).
\end{proof}

\begin{remark}
For a non-negative random variable $W$ with finite mean,
the transformation $W\to U W^{(1)}$ is referred to
in the Stein's method literature as the ``equilibrium" transformation,
first defined in this context in \cite{rope10}, where Theorem \ref{thmexp} is
also shown.

Due to the close relationship between the exponential and geometric distributions, 
it is not surprising that there is a discrete analog of Theorem \ref{thmexp} with the
exponential distribution replaced by the geometric; see \cite{prr10} for this discussion in
the context of Stein's method.
\end{remark}

\section{Proof of Proposition \ref{proparcmax} and discussion}\label{secf}

\begin{proof}[Proof of Proposition \ref{proparcmax}]
We will show that for $n\geq2$ and $Y_1,\ldots, Y_n$ non-negative i.i.d.\ random variables,
$Y_1\ed  c G_{1/(n-1)}$ for some $c>0$ if and only if
\ban{
Y_1\ed B_{1/(n-1),1} (Y_1+\cdots + Y_n), \label{bg1g}
}
where $B_{1/(n-1),1}$ is independent of $(Y_1,\ldots, Y_n)$, and $G_{a}, B_{a,b}$ are gamma and
beta variables as defined above.  The proposition then follows from this fact with $n=3$ 
after noting that $V^2\ed B_{1/2,1}$
and if $X$ has a mean zero and variance one normal distribution, then $X^2\ed 2G_{1/2}$.

The forward implication is a consequence of Proposition \ref{prop2a}
coupled with the fact that $G_{a+b}\ed G_a+G_b$, where $G_a$ and $G_b$ are independent.
To establish the result
we assume~\eq{bg1g} and show $Y_1\ed c G_{1/(n-1)}$.  Since we assume that $Y_1$ is non-negative,
we define the Laplace transform 
\ba{
\phi(\lambda)=\IE e^{-\lambda Y_1}, \lambda\geq0.
}
By conditioning on the value of $B_{1/(n-1),1}$ in \eq{bg1g}, we find for $\lambda>0$,
\ba{
\phi(\lambda)&=\IE \phi(B_{1/(n-1),1} \lambda)^n \\
	&=\frac{1}{n-1}\int_0^1u^{-(n-2)/(n-1)}\phi(u\lambda)^n du\\
	&=\frac{1}{(n-1)\lambda^{1/(n-1)}}\int_0^\lambda t^{-(n-2)/(n-1)} \phi(t)^n dt,
}
where we have made the change of variable $t=u \lambda$ in the last equality.  
We can differentiate the equation above with respect to $\lambda$ which yields
\ban{
\phi'(\lambda)&=-\frac{\lambda^{-n/(n-1)}}{(n-1)^2}\int_0^\lambda t^{-(n-2)/(n-1)} \phi(t)^n dt+\frac{1}{(n-1)\lambda} \phi(\lambda)^n, \notag \\
	\phi'(\lambda)&=\frac{-\phi(\lambda)+\phi(\lambda)^n}{(n-1)\lambda}. \label{odeg}
}
Thus, we find that $\phi$ satisfies the differential equation \eq{odeg} with boundary condition $\phi(0)=1$.

By computing the derivative using \eq{odeg} and using that $0<\phi(\lambda)\leq 1$ for $\lambda>0$,  
we find that for some constant $c\geq0$,
\ba{
\frac{1-\phi(\lambda)^{n-1}}{\lambda\phi(\lambda)^{n-1}}=c, \,\,\,  \lambda>0.
}
Solving this equation for $\phi(\lambda)$ implies 
\ba{
\phi(\lambda)=(1+c \lambda)^{-1/(n-1)},
}
which is the Laplace transform of $c G_{1/(n-1)}$, as desired.

\end{proof}

The proof of Proposition~\ref{proparcmax} and the beta-gamma algebra
suggest the following conjecture.
\begin{conjecture}\label{c1}
Let $n\geq2$ and $\textbf{Y}=(Y_1, Y_2,\ldots, Y_n)$ be a vector of i.i.d.\ random variables. Then
$Y_1$ is equal in distribution to $cG_{a}$ for some constant $c>0$ if and only if for $V=B_{a,(n-1)a}$ independent of $\textbf{Y}$,
\ban{
Y_1\ed V(Y_1+Y_2+\ldots +Y_n). \label{bg22a}
}
\end{conjecture}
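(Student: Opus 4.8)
The plan is to prove the two implications of Conjecture~\ref{c1} separately, following the template of Propositions~\ref{prop2a} and~\ref{proparcmax}, of which this statement is the natural common generalization; indeed, the special case $a=1/(n-1)$ is exactly \eqref{bg1g}, since then $(n-1)a=1$ and $V=B_{a,1}=B_{1/(n-1),1}$. For the forward implication I would argue exactly as in the proof of \eqref{bg1g}: if $Y_1\ed cG_a$ then, since the $Y_i$ are i.i.d.\ and $G_{a+b}\ed G_a+G_b$ for independent gammas, the sum satisfies $Y_1+\cdots+Y_n\ed cG_{na}$, and the beta-gamma algebra $G_a\ed B_{a,(n-1)a}G_{na}$ (independent factors) gives $cG_a\ed B_{a,(n-1)a}\cdot cG_{na}\ed V(Y_1+\cdots+Y_n)$, which is \eqref{bg22a}. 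Equivalently, for gamma $Y_1$ one has $Y_1+\cdots+Y_n\ed Y_1^{((n-1)a)}$, so \eqref{bg22a} collapses to the forward implication of Proposition~\ref{prop2a} with $r=a$, $s=(n-1)a$, $p=1$. Note, however, that the identity $Y_1+\cdots+Y_n\ed Y_1^{((n-1)a)}$ holds \emph{only} for gamma $Y_1$, so the reverse implication is genuinely stronger than Proposition~\ref{prop2a} and cannot simply be quoted from it.

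For the reverse implication I would work with moments, as in the converse half of Proposition~\ref{prop2a}. Writing $\mu_k=\IE Y_1^k$ and $S=Y_1+\cdots+Y_n$, independence of $V$ gives $\mu_k=\IE V^k\,\IE S^k$. Expanding $\IE S^k$ multinomially and isolating the $n$ terms in which a single summand carries all the weight yields $\IE S^k=n\mu_k+L_k$, where $L_k$ is a polynomial in $\mu_1,\ldots,\mu_{k-1}$ only. From $\IE V^k=\Gamma(a+k)\Gamma(na)/(\Gamma(na+k)\Gamma(a))$ one computes the clean formula $n\IE V^k=\prod_{j=1}^{k-1}(a+j)/(na+j)$, which equals $1$ at $k=1$ and is strictly less than $1$ for every $k\ge2$ because $n\ge2$. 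Thus the $k=1$ equation is an identity that leaves $\mu_1=ca$ as the free scale parameter, while for each $k\ge2$ the rearranged relation $\mu_k\,(1-n\IE V^k)=\IE V^k L_k$ determines $\mu_k$ uniquely from the lower moments. Since the moments of $cG_a$ satisfy this same recursion (by the forward direction), one concludes $\mu_k=\IE(cG_a)^k$ for all $k$, and Carleman's condition—verified by Stirling's formula exactly as in the proof of Proposition~\ref{prop2a}—shows that these moments determine the law, giving $Y_1\ed cG_a$.

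The main obstacle is establishing finiteness of all the moments. In Proposition~\ref{prop2a} the power-bias structure let finiteness of $\IE X^s$ bootstrap to $\IE X^{2s},\IE X^{3s},\ldots$; here the sum structure affords no such climb, since $\IE S^k$ involves only moments of $Y_1$ up to order $k$. The borderline computation $n\IE V^\gamma=1$ holds only at $\gamma=1$ (the ratio of gammas being strictly decreasing in $\gamma$ from $n$ to $0$), which signals a critical tail exponent of $1$; a fixed point with a power tail of this order would have infinite mean, so a minimal moment hypothesis such as $\IE Y_1<\infty$ may need to be added to exclude heavy-tailed solutions. Under such a hypothesis the recursion read as $\mu_k=\IE V^k L_k/(1-n\IE V^k)$ should propagate finiteness upward, once finiteness of each $\mu_k$ is secured by a truncation argument. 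As an alternative to moments one may try the Laplace-transform route of Proposition~\ref{proparcmax}: the functional equation $\phi(\lambda)=\IE\phi(V\lambda)^n$ becomes $\lambda^{na-1}\phi(\lambda)=C\int_0^\lambda t^{a-1}(\lambda-t)^{(n-1)a-1}\phi(t)^n\,dt$, which reduces to an ordinary differential equation solved by $\phi(\lambda)=(1+c\lambda)^{-a}$ when $(n-1)a$ is a positive integer, but for general $a$ is a fractional-order integral equation whose unique solvability among completely monotone $\phi$ with $\phi(0)=1$ is the genuinely hard step.
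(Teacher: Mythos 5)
You should first recognize that the statement you are proving is labeled a \emph{conjecture}: the paper has no complete proof of it, and the discussion following the statement delimits exactly what the authors can show. Measured against that discussion, your proposal is accurate and essentially reproduces the known partial results. Your forward implication is the paper's own argument: $Y_1+\cdots+Y_n\ed cG_{na}$ together with the beta-gamma identities $G_a+G_b\ed G_{a+b}$ and $B_{a,(n-1)a}G_{na}\ed G_a$, and your observation that $a=1/(n-1)$ recovers \eqref{bg1g} is also made in the paper. Your moment recursion for the converse is, in substance, the paper's remark that ``under the assumption that $Y_1$ has all positive integer moments finite, the conjecture follows after using \eqref{bg22a} to obtain a recursion relation for the moments''; your computation $n\IE V^k=\prod_{j=1}^{k-1}(a+j)/(na+j)$, which equals $1$ only at $k=1$ and is $<1$ for all $k\geq2$, is correct and makes that remark precise, with Carleman's condition finishing as in Proposition \ref{prop2a}. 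Your alternative Laplace-transform route, including the reduction to the paper's differential equation in the special case and the identification of a fractional-order integral equation as the obstruction for general $a$, matches the paper's own admission that ``in the general case, we do not have an argument for the uniqueness of the solution.''

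The genuine gap---which you flag honestly rather than hide---is the converse in the absence of moment hypotheses, and it is precisely the gap that makes the statement a conjecture rather than a theorem. Two points should be sharpened. First, the conjecture as stated assumes nothing about moments, so even adding $\IE Y_1<\infty$ would change the statement; ruling out heavy-tailed fixed points is part of what is open. Second, your hope that finite mean plus truncation ``should propagate finiteness upward'' is unsubstantiated, and the obstruction is structural: the critical exponent is exactly $1$, i.e.\ $n\IE V=1$, so the natural iteration of truncated bounds stalls. Concretely, writing $f(M)=\IE(Y\wedge M)^2$ and using $(VS)\wedge M=V\bigl(S\wedge (M/V)\bigr)$ together with $S\wedge K\leq\sum_i(Y_i\wedge K)$, one gets $f(M)\leq n\IE\bigl[V^2f(M/V)\bigr]+C$ with $C$ depending only on $\IE Y_1$; iterating $m$ times and inserting the a priori bound $f(x)\leq x\,\IE Y_1$ returns $f(M)\leq M\,\IE Y_1\,(n\IE V)^m+C'=M\,\IE Y_1+C'$ for every $m$, so no finiteness of $\IE Y^2$ is ever extracted. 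This is in marked contrast to Proposition \ref{prop2a}, where the power-bias structure raises the moment order at each step ($\IE X^s<\infty\Rightarrow\IE X^{2s}<\infty\Rightarrow\cdots$); as you correctly note, the sum structure in \eqref{bg22a} provides no such ladder. Your proposal should therefore be read as a correct account of both known fragments (the forward direction, and the converse under all moments finite) together with a precise identification of the open problem---not as a proof of the conjecture, which remains open.
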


The forward implication of the conjecture is an easy consequence of the following beta-gamma algebra facts:
for $G_a$, $G_b$, and $B_{a,b}$ independent, $B_{a,b}G_{a+b}\ed G_a$, and $G_a+G_b\ed G_{a+b}$. 

Conversely, assuming \eq{bg22a}, it is possible to follow the proof of Proposition \ref{proparcmax}, which leads to an integral equation
for the Laplace transform of $Y_1$. It is easy to verify that the Laplace transform of the appropriate gamma distribution satisfies this
equation, so it is only a matter of showing the integral equation has a unique scale family of solutions.  
In the case $a=1/(n-1)$ the integral equation has a simpler form from which the required uniqueness follows from
the proof of Proposition \ref{proparcmax} above.  
In the 
general case, we do not have an argument for the uniqueness of the solution.
However, under the assumption that $Y_1$ has all positive integer moments finite, the conjecture follows after using
\eq{bg22a} to obtain
a recursion relation for the moments which, up to the scale factor, determines those of a gamma distribution with the appropriate parameter.

Conjecture \ref{c1} is very similar to Lukacs' characterization of the the gamma distribution \cite{luk55}
that
positive, non-degenerate, independent variables $X,Y$ have the gamma distribution if and only if $X+Y$ and $X/(X+Y)$ 
are independent.  However, it does not appear that this result can be used
to show the difficult implication of the conjecture.  Note also that Lukacs' result also
characterizes  beta distributions as the only distributions which can be 
written as $X/(X+Y)$ independent of $X+Y$ for positive, non-degenerate, independent variables $X,Y$.  Thus, a question
related to our conjecture is that if \eq{bg22a} holds for independent variables $Y_1,\ldots,Y_n$ and $V$, does this imply that $V$ has
a beta distribution?

Conjecture \ref{c1} is connected to the observation of Poincar\'{e} (see the introduction of \cite{mck73})
that the coordinates of a point uniformly chosen on the $(n-1)$ dimensional sphere of radius $\sqrt{n}$ 
are asymptotically distributed as independent standard Gaussians.  Analogous to the discussion in the introduction, we can
realize these uniformly distributed points as $\sqrt{n}R^{-1}(X_1, \ldots, X_n)$, where 
$X_1, \ldots, X_n$ are
independent standard normal variables and $R=(X_1^2+\cdots+X_n^2)^{1/2}$.
Squaring these coordinates, Poincar\'e's result 
implies that $nX_1^2/(X_1^2+\cdots+X_n^2)$ is asymptotically distributed as $X_1^2$.
Since $X_1^2\ed 2G_{1/2}$, taking the limit as $n\to\infty$ on the right side of \eq{bg22a} with $a=1/2$
yields a related fact.

The forward implication of Proposition \ref{proparcmax} 
is evidenced also
by creation of a three-dimensional Bessel process by conditioning a one-dimensional
Brownian motion not to hit zero. Indeed, a process
version of 
Proposition \ref{proparcmax} is involved 
in the proof of the ``$2M-X$" theorem provided in \cite{piro81}; see
Section 2 and especially Section 2.3 of \cite{ppy98}.
More generally, process analogs of the beta-gamma
algebra
can be found in Section 3 of \cite{ppy98}.  

Some extensions 
of the characterizations discussed in this article to
more complicated distributions can be
found in the recent work \cite{prr11}.

\def\polhk#1{\setbox0=\hbox{#1}{\ooalign{\hidewidth
  \lower1.5ex\hbox{`}\hidewidth\crcr\unhbox0}}}

\end{document}